\newcommand{\NN}{\mathbb{N}}
\newcommand{\ZZ}{\mathbb{Z}}
\newcommand{\CC}{\mathbb{C}}
\DeclareMathOperator*{\argmin}{arg\,min}
\DeclareMathOperator{\fov}{Num}
\DeclareMathOperator{\fovcl}{\overline{Num}}
\DeclareMathOperator{\dist}{dist}
\DeclareMathOperator{\real}{Re}
\newcommand{\LREF}[1]{{\normalfont(\hyperref[#1]{L\labelcref*{#1}})}}
\newcommand{\MREF}[1]{{\normalfont(\hyperref[#1]{M\labelcref*{#1}})}}
\newcommand{\opb}{B}
\title{Stability of linear GMRES convergence
       with respect to compact perturbations\thanks{%
         Submitted May 26, 2020, revised November 4, 2020
         \funding{This research has been funded by the European Union (EU)
                  -- European Social Fund (ESF) and the Free State of Saxony,
                  project GEOSax (application number 100310486).}}}
\author{Jan Blechta\thanks{%
    Chemnitz University of Technology, Faculty of Mathematics
    (\email{jan.blechta@math.tu-chemnitz.de}).}}
\begin{document}
\maketitle

\begin{abstract}
  Suppose that a~linear bounded operator $B$ on a~Hilbert space exhibits
  at least
  linear GMRES convergence, i.e., there exists $M_B<1$ such that
  the GMRES residuals fulfill $\|r_k\|\leq M_B\|r_{k-1}\|$
  for every initial residual $r_0$ and step $k\in\NN$.
  We prove that GMRES with a~compactly perturbed operator
  $A=B+C$ admits the bound
  $\|r_k\|/\|r_0\|\leq\prod_{j=1}^k\bigl(M_B+(1+M_B)\,\|A^{-1}\|\,\sigma_j(C)\bigr)$,
  i.e., the singular values $\sigma_j(C)$ control the departure from
  the bound for the unperturbed problem.
  This result can be seen as an extension of
  [{\sc I.~Moret}, {\em A note on the superlinear convergence of {GMRES}},
  SIAM J. Numer. Anal., 34 (1997), pp.~513--516,
  DOI: \href{https://doi.org/10.1137/S0036142993259792}{10.1137/S0036142993259792}],
  where only the case $B=\lambda I$ is considered.
  In this special case $M_B=0$ and the resulting
  convergence is superlinear.
\end{abstract}

\begin{keywords}
  GMRES,
  linear convergence,
  compact perturbation
\end{keywords}

\begin{AMS}
  65F10
\end{AMS}

\tableofcontents

\section{Introduction}
For a~bounded linear operator $A$ on a~Hilbert space, we consider the equation
$Ax=b$ and its GMRES approximations $x_k$ with residuals $r_k^{A,r_0}:=b-Ax_k$,
i.e., $\|r_k^{A,r_0}\| = \min_{p\in\mathcal{P}_k,\,p(0)=1} \|p(A)r_0\|$.
In this paper we are concerned with convergence of
the GMRES method for $A=B+C$ where $B$ is such that it exhibits
at least
linear GMRES convergence and $C$ is compact. The main result, the proof of which
we postpone to Section~\ref{sec:proof}, is as follows.
\begin{theorem} \label{thm:stab}
  Let $H$ be a~complex separable Hilbert space.
  Suppose $A=B+C$ is invertible with
  $B$ an~invertible bounded linear operator on~$H$ and
  $C$ a~compact linear operator on~$H$ with singular values
  $\|C\|=\sigma_1(C)\geq\sigma_2(C)\geq\cdots\geq0$.
  Further assume that $B$ exhibits linear GMRES convergence, i.e, that
  $B$'s linear reduction factor
  \begin{gather}
    \label{eq:deflinear}
    M_B \coloneqq
    \sup_{z\in H} \sup_{k\in\NN} \frac{\|r_k^{B,z}\|}{\|r_{k-1}^{B,z}\|}
    \intertext{%
      satisfies $M_B < 1$.
      Then for any $r_0\in H$ the GMRES residuals $r_k^{A,r_0}$ fulfill
    }
    \label{eq:stab}
    \frac{\|r_k^{A,r_0}\|}{\|r_0\|}
    \leq
    \prod_{j=1}^k \Bigl( M_B + (1+M_B)\, \|A^{-1}\|\, \sigma_j(C) \Bigr)
    \qquad
    \text{for all } k\in\NN
    \\[-3mm] \intertext{and} \noalign{\vskip-1mm}
    \label{eq:limsup}
    \limsup_{k\rightarrow\infty} \frac{\|r_k^{A,r_0}\|}{\|r_{k-1}^{A,r_0}\|}
    \leq M_B.
    \intertext{Consequently, if there exists $p\geq1$ such that}
    \nonumber
    \|C\|_{\mathcal{S}_p}
    \coloneqq \Bigl( \sum_{j=1}^\infty \sigma_j(C)^p \Bigr)^\frac{1}{p}
    < \infty,
    \\[-4mm] \intertext{then} \noalign{\vskip-2mm}
    \label{eq:stabrate}
    \Biggl( \frac{\|r_k^{A,r_0}\|}{\|r_0\|} \Biggr)^{\frac{1}{k}}
    \leq
    M_B + k^{-\frac{1}{p}}\, (1+M_B)\, \|A^{-1}\|\, \|C\|_{\mathcal{S}_p}
    \qquad
    \text{for all } k\in\NN.
  \end{gather}
\end{theorem}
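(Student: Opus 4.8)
The plan is to reduce everything to a single GMRES step for $A$ and then iterate, so I would begin by re-reading \eqref{eq:deflinear}. Composing the optimal degree-$(k-1)$ polynomial for $B$ with one extra linear factor gives a competitor in the degree-$k$ minimization, so $\|r_k^{B,z}\|\le\min_{\mu\in\CC}\|r_{k-1}^{B,z}-\mu B r_{k-1}^{B,z}\|$; dividing by $\|r_{k-1}^{B,z}\|$ and taking $k=1$ shows that the supremum in \eqref{eq:deflinear} is already attained at $k=1$, i.e. $M_B=\sup_{z\neq0}\min_{\mu\in\CC}\|z-\mu Bz\|/\|z\|$. Since $\min_{\mu}\|z-\mu Bz\|/\|z\|$ is the sine of the gap $\Theta(\CC z,\CC Bz)$ between the two lines, $M_B$ is exactly the worst-case one-step reduction of $B$, and one $B$-step from \emph{any} residual contracts the norm by at most $M_B$.

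The heart of the matter is a one-step estimate for $A$: for every $s\neq0$, $\min_{\mu\in\CC}\|s-\mu As\|\le M_B\|s\|+\|A^{-1}\|\,\|Cs\|$. I would prove it in the gap metric $\Theta$ on one-dimensional subspaces, using $\min_{\mu}\|s-\mu As\|=\|s\|\sin\Theta(\CC s,\CC As)$. Writing $As=Bs+Cs$, the component of $As$ orthogonal to $\CC Bs$ equals that of $Cs$, whence $\sin\Theta(\CC As,\CC Bs)\le\|Cs\|/\|As\|\le\|A^{-1}\|\,\|Cs\|/\|s\|$, while $\sin\Theta(\CC s,\CC Bs)\le M_B$ by the reformulation above. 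The triangle inequality for $\Theta$ and subadditivity of $\sin$ on $[0,\tfrac{\pi}{2}]$ then give the bound; note it is even a touch stronger than the factor $1+M_B$ in \eqref{eq:stab}, leaving some slack.

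Two of the conclusions fall out quickly. For \eqref{eq:limsup} I would apply the one-step estimate at the genuine residual $s=r_{k-1}^{A,r_0}$, using $\|r_k^{A,r_0}\|\le\|r_1^{A,s}\|$, to obtain $\|r_k^{A,r_0}\|/\|r_{k-1}^{A,r_0}\|\le M_B+\|A^{-1}\|\,\|C\hat r_{k-1}\|$ with $\hat r_{k-1}$ the normalized residual; it then remains to see $\|C\hat r_k\|\to0$. The Galerkin relation $r_k\perp A\mathcal{K}_k(A,r_0)$ forces every weak limit point of $\hat r_k$ to be orthogonal to $A\mathcal{K}_\infty$ and to lie in the closed $A$-invariant subspace carrying the iteration, hence (as $A$ is invertible) to vanish; so $\hat r_k\rightharpoonup0$ and compactness of $C$ yields $\|C\hat r_k\|\to0$. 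For \eqref{eq:stabrate}, AM--GM applied to \eqref{eq:stab} gives $(\|r_k^{A,r_0}\|/\|r_0\|)^{1/k}\le M_B+(1+M_B)\|A^{-1}\|\cdot\frac1k\sum_{j=1}^k\sigma_j(C)$, and Hölder's inequality $\sum_{j=1}^k\sigma_j(C)\le k^{1-1/p}\|C\|_{\mathcal{S}_p}$ finishes it.

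The product bound \eqref{eq:stab} is where the real work lies, and the difficulty is to let the $j$-th step feel only the tail value $\sigma_j(C)$ instead of $\|Cs\|$. The plan is a nested construction that removes one singular direction per GMRES step, so that the working vector entering step $j$ is effectively orthogonal to the leading right singular vectors $v_1,\dots,v_{j-1}$ of $C$; then $\|Cs\|\le\sigma_j(C)\|s\|$ and the one-step estimate supplies the $j$-th factor. The obstruction is that a single linear factor $1-\mu_j z$ has only one free parameter and cannot at once contract the norm and preserve the $j-1$ orthogonality relations already arranged, and since $C$ need not be normal its left and right singular vectors genuinely differ, so naive orthogonalization breaks down. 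I expect the clean way is to run an induction on $k$ against the truncated operators $\tilde A_m=B+\sum_{i>m}\sigma_i(C)\langle\cdot,v_i\rangle u_i$ (so that $\tilde A_0=A$ and $\|\tilde A_m-B\|=\sigma_{m+1}(C)$), trading one unit of polynomial degree for the deletion of one rank-one term and telescoping the per-step factors into \eqref{eq:stab}; pinning the constant exactly as stated, including the control of the intermediate inverses by $\|A^{-1}\|$, is the delicate point I would expect to absorb most of the effort.
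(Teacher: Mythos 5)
Your one-step estimate $\min_{\mu}\|s-\mu As\|\leq M_B\|s\|+\|A^{-1}\|\,\|Cs\|$ is correct and the deductions of \eqref{eq:stabrate} from \eqref{eq:stab} via AM--GM and H\"older match the paper exactly. But the proof of the central bound \eqref{eq:stab} is not actually there: you describe a deflation strategy (one singular direction of $C$ removed per GMRES step, or an induction against the truncated operators $\tilde A_m$), correctly identify the obstruction -- a single linear factor cannot simultaneously contract and preserve the accumulated orthogonality relations when $C$ is non-normal -- and then defer the resolution. That obstruction is precisely why the paper does not argue this way. The mechanism that makes $\sigma_j(C)$ appear in the $j$-th factor is Moret's formula $\|r_k\|=|(t_{k+1},(I-\lambda A^{-1})z_k)|\,\|r_{k-1}\|$ with $\{t_j\}$, $\{z_j\}$ the ascending orthonormal bases of the Krylov spaces $\mathcal{K}_k(A,r_0)$ and their images $A\mathcal{K}_k(A,r_0)$: the product $\prod_{j=1}^k|(t_{j+1},(I-\lambda A^{-1})z_j)|$ is the determinant of an upper triangular matrix and is therefore bounded by $\prod_{j=1}^k\sigma_j(I-\lambda A^{-1})$ (Pietsch's determinant inequality). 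Writing $I-\lambda A^{-1}=(I-\lambda B^{-1})+\lambda B^{-1}CA^{-1}$ and using subadditivity and multiplicativity of approximation numbers gives $\sigma_j(I-\lambda A^{-1})\leq\|I-\lambda B^{-1}\|+\|\lambda B^{-1}\|\,\|A^{-1}\|\,\sigma_j(C)$, and the optimal $\lambda$ satisfies $\|\lambda B^{-1}\|\leq 1+M_B$. Without this (or an equivalent) device your argument for \eqref{eq:stab} does not close.

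There is a second, smaller gap in your proof of \eqref{eq:limsup}. You need $\|C\hat r_k\|\to0$ for the normalized residuals, and you justify it by claiming that any weak limit point lies in $\overline{\mathcal{K}_\infty}$, is orthogonal to $A\mathcal{K}_\infty$, and hence vanishes because $A$ is invertible. That inference is false: for the bilateral shift $U$ on $\ell^2(\ZZ)$ with $r_0=e_0$ one has $e_0\in\overline{\mathcal{K}_\infty}$, $e_0\perp U\mathcal{K}_\infty$, and $U$ invertible, yet $e_0\neq0$ (and indeed $\hat r_k\equiv e_0$ does not tend weakly to zero). Invertibility of $A$ does not make $A$ surjective on the invariant subspace $\overline{\mathcal{K}_\infty}$. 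The paper sidesteps this entirely by applying the compactness of $C$ to the sequence $z_k$, which is an orthonormal (or eventually zero) system and hence weakly null by Bessel's inequality -- a property the normalized residuals simply do not have in general.
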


A~preliminary version of the result in Theorem~\ref{thm:stab}
has been published
in the thesis \cite[Appendix~III.B]{ble2019}, where it has
been used to study the pressure convection--diffusion
preconditioner~\cite{ElmanSilvesterWathen2014}
for the solution of the Navier--Stokes equations.
We expect the result to be applicable to analysis of
a~broad range of problems comming from the numerical solution
of partial differential equations, where compact perturbations
of ``nice'' operators often appear.

The linear reduction factor $M_B$ as given by~\eqref{eq:deflinear}
is the best constant of the estimate
\begin{align}
  \label{eq:deflinear2}
  \|r_k^{B,r_0}\| \leq M_B \|r_{k-1}^{B,r_0}\|
  \qquad \text{for all } k\in\NN \text{ and all } r_0\in H.
\end{align}
Obviously it holds $0\leq M_B\leq1$ but
Theorem~\ref{thm:stab} is non-trivial only when $M_B<1$.
The result can be interpreted as a~type of
stability\footnote{%
  Here the term ``stability'' is \emph{not} used in the sense
  of studying the behavior of GMRES in finite arithmetic precision,
  which itself is a~broad research field
  \cite[section~5.10]{liesen-strakos-2013}.
} of linear GMRES convergence
with respect to compact perturbations. The bound~\eqref{eq:stab}
controls the departure from linear convergence~\eqref{eq:deflinear2}
in terms of the singular values of~$C$. Recall that $\sigma_j(C)\to0$
as $j\to\infty$ for $C$ compact. If additionally $\sum_{j=1}^\infty
\sigma_j(C)^p$ is finite\footnote{%
  Note that $\|\cdot\|_{\mathcal{S}_p}$ in Theorem~\ref{thm:stab}
  is the norm (quasinorm) on the $p$-Schatten--von-Neumann
  ideal~$\mathcal{S}_p(H)$ when $p\geq1$ ($0<p<1$).
  The Hilbert--Schmidt norm $\|\cdot\|_{\mathcal{S}_2}$
  takes the form of the Frobenius matrix norm
  in the finite-dimensional case.
}, then, in view of~\eqref{eq:stabrate},
the mean departure from the linear convergence is
superlinear $\smash{O\bigl(k^{-\frac1p}\bigr)}$.
Furthermore, if $C$ has finite rank $L$, i.e.,
$\sigma_L(C) > \sigma_{L+1}(C) = 0$,
then \eqref{eq:stab} easily implies
\begin{align}
  \label{eq:stabfiniterank}
  \frac{\|r_k^{A,r_0}\|}{\|r_0\|}
  &\leq
  \eta\, M_B^k
  \quad
  \text{for all } k\in\NN,
  &
  \eta
  &\coloneqq
  \prod_{j=1}^L \Bigl( 1 + (1+M_B^{-1})\, \|A^{-1}\|\, \sigma_j(C) \Bigr)
\end{align}
if $M_B>0$; the special case $M_B=0$, which we omit, corresponds to a~finite-rank
perturbation of the identity.

For the sake of illustration we explicitly give a~variant of
Theorem~\ref{thm:stab} for the finite-dimensional case.
\begin{corollary}
  Let $B\in\CC^{N\times N}$ be a~non-singular matrix
  and $C\in\CC^{N\times N}$ be such that $A=B+C$ is non-singular.
  Let $M_B\in[0,1]$ be given by~\eqref{eq:deflinear},
  or equivalently,
  let $M_B\in[0,1]$ be the best possible constant
  from the inequality~\eqref{eq:deflinear2}.
  If $M_B<1$ then \eqref{eq:stab} holds true and \eqref{eq:stabrate},
  in particular, implies that
  \begin{gather*}
    \Biggl( \frac{\|r_k^{A,r_0}\|}{\|r_0\|} \Biggr)^{\frac{1}{k}}
    \leq
    M_B + \frac{1}{\sqrt{k}}\, (1+M_B)\, \|A^{-1}\|\, \|C\|_{\mathcal{S}_2}
    \qquad
    \text{for all } k\in\NN,
  \end{gather*}
  where $\|C\|_{\mathcal{S}_2}$ is the Frobenius matrix norm of~$C$.
\end{corollary}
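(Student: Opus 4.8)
The plan is to obtain the corollary as a direct specialization of Theorem~\ref{thm:stab} to the finite-dimensional Hilbert space $H=\CC^N$ equipped with the standard inner product. First I would verify that all hypotheses of Theorem~\ref{thm:stab} are met in this setting: $\CC^N$ is a complex separable Hilbert space (being finite-dimensional); a matrix $B\in\CC^{N\times N}$ is a bounded linear operator on it, and ``non-singular'' coincides with ``invertible''; every linear operator on a finite-dimensional space is compact, so $C$ is a compact operator whose singular values satisfy $\sigma_j(C)=0$ for $j>N$; and $A=B+C$ is assumed non-singular, i.e.\ invertible. The matrix GMRES residuals $r_k^{A,r_0}$ are exactly the residuals appearing in Theorem~\ref{thm:stab}, and the two descriptions of $M_B$ in the corollary agree because, as recorded after Theorem~\ref{thm:stab}, the supremum in \eqref{eq:deflinear} is precisely the best constant in the estimate \eqref{eq:deflinear2}. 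Hence \eqref{eq:stab} and \eqref{eq:stabrate} apply verbatim.

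Next I would specialize \eqref{eq:stabrate} to $p=2$. Since $C$ has finite rank, the quantity $\|C\|_{\mathcal{S}_2}=\bigl(\sum_{j=1}^\infty\sigma_j(C)^2\bigr)^{1/2}=\bigl(\sum_{j=1}^N\sigma_j(C)^2\bigr)^{1/2}$ is finite, so the hypothesis $\|C\|_{\mathcal{S}_p}<\infty$ of Theorem~\ref{thm:stab} holds with $p=2$. Substituting $p=2$ yields $k^{-1/p}=k^{-1/2}=1/\sqrt{k}$, which is exactly the factor in the displayed bound.

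The only identification requiring a word is $\|C\|_{\mathcal{S}_2}=\|C\|_F$, the Frobenius norm. This is the standard fact that the sum of squared singular values equals $\operatorname{tr}(C^{*}C)=\sum_{i,j}|C_{ij}|^2$; I would record it by diagonalizing the positive semidefinite matrix $C^{*}C$, whose eigenvalues are the $\sigma_j(C)^2$ and whose trace is basis-independent. With this, the displayed inequality is precisely \eqref{eq:stabrate} at $p=2$, completing the argument. I expect essentially no obstacle here: all of the analytic content sits in Theorem~\ref{thm:stab}, and the work reduces to confirming that passing to $H=\CC^N$ loses nothing, together with the elementary $\mathcal{S}_2$–Frobenius identity.
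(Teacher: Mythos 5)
Your proposal is correct and matches the paper's (implicit) argument exactly: the corollary is stated purely as a specialization of Theorem~\ref{thm:stab} to $H=\CC^N$ with $p=2$, and the paper itself notes in a footnote that $\|\cdot\|_{\mathcal{S}_2}$ reduces to the Frobenius norm in the finite-dimensional case. Nothing is missing.
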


We will devote Section~\ref{sec:linear} to obtaining an~intrinsic
characterization of~$M_B$ which does not depend on GMRES.
We will show in Propositions~\ref{thm:linear}
and~\ref{lemma:minimizerlambda} that
\begin{align*}
  M_B
  = \min_{\lambda\in\CC} \|I-\lambda B\|
  = \min_{\lambda\in\CC} \|I-\lambda B^{-1}\|
\end{align*}
and that the minima are attained on $\fovcl{B^{-1}}$ and $\fovcl{B}$,
respectively,
where $\fovcl{T}$ stands for the closure of
\begin{gather*}
  \fov T \coloneqq
  \bigl\{ (Tz,z),\, z\in H,\, \|z\|=1 \bigr\},
\end{gather*}
the numerical range (field of values) of~$T$.
The well-known sufficient condition for $M_B<1$ that $0\notin\fovcl{B}$
is due to Elman's bound~\cite{elman1982}
\begin{align*}
  M_B
  &\leq \sqrt{1-\frac{\nu_B^2}{\|B\|^2}}
\end{align*}
and the improved bound by Starke~\cite{starke1997},
Eiermann, and Ernst~\cite{EiermannErnst2001}
\begin{align*}
  M_B
  &\leq \sqrt{1-\nu_B\nu_{B^{-1}}}
  \leq \sqrt{1-\frac{\nu_B^2}{\|B\|^2}},
\end{align*}
where $\nu_{T}=\inf_{\lambda\in\fov{T}}|\lambda|$.
(Note that $\nu_{T^{-1}}>0$ if and only if $\nu_T>0$ for invertible~$T$.)
If $H$ is finite-dimensional, then the linear convergence~\eqref{eq:deflinear2}
with some $M_B<1$ is equivalent to strictly monotone convergence for any
initial residual, i.e.,
\begin{align}
  \label{eq:defmonotone}
  \|r_k^{B,r_0}\| < \|r_{k-1}^{B,r_0}\|
  \quad\text{or}\quad
  \|r_{k-1}^{B,r_0}\| = 0
  \qquad \text{for every } k\in\NN \text{ and every } r_0\in H.
\end{align}
We will also show in Proposition~\ref{thm:linear}
that \eqref{eq:deflinear2} with $M_B<1$ holds
if and only if $0\notin\fovcl{B}$,
that~\eqref{eq:defmonotone} holds
if and only if $0\notin\fov{B}$,
and that the two situations are not, in general, equivalent unless
$H$ is finite-dimensional or $B$ is self-adjoint.
In fact, we will give an~example in Remark~\ref{rem:example}
of a~unitary $B$ with $0\notin\fov{B}$ and
$0\in\fovcl{B}$ so that $B$ exhibits strictly monotone
GMRES convergence~\eqref{eq:defmonotone} but does not exhibit linear
GMRES convergence of any rate, i.e., the best~$M_B$
of~\eqref{eq:deflinear2} is $M_B=1$.

The definition of~$M_B$ that we consider corresponds to the optimal constant
in bounds of the form $ \|r_k^{B,r_0}\| \leq M_B^k\, \|r_0\| $.
On the other hand, one might have a~bound of the type
$ \|r_k^{B,r_0}\| \leq \eta\,M_{B,\eta}^k\, \|r_0\| $
with some $\eta>1$ and $M_{B,\eta}<1$, which can be a~better bound
in the sense that $M_{B,\eta}<M_B$. This type of bounds appears
frequently in the existing literature; see, e.g., \cite{BGT2005,liesen-tichy-2020}.
It is interesting to observe that we have already linked the two types
of bounds by~\eqref{eq:deflinear2} and~\eqref{eq:stabfiniterank}
for the case where $A-B$ is of finite rank. As a~consequence,
the bound $ \|r_k^{A,r_0}\| \leq M_A^k\, \|r_0\| $ with $M_A \leq 1$
can be improved to the bound~\eqref{eq:stabfiniterank} with $M_B < M_A \leq 1$ and $\eta>1$
by considering a~splitting $A=B+C$ with a~finite-rank $C$.
Nevertheless, in the remainder of this paper we focus on
the case without $\eta>1$ and will only shortly revisit
the topic of splittings, albeit in a~different context,
in the closing Section~\ref{sec:outro}.
Note that various splittings $A=B+C$ with $B$ ``good'' in some
sense and $C$ of low rank were
also considered by Huhtanen and Nevanlinna~\cite{huhtanen-nevanlinna-2000}.

Theorem~\ref{thm:stab} can be seen as a~generalization Moret's
result~\cite{moret1997}, which only considers $B=\lambda I$,
$\lambda\neq0$. Indeed, $M_{\lambda I}=0$ and hence
\eqref{eq:limsup} gives the \emph{Q-superlinear convergence}
$\lim_{k\rightarrow\infty} \frac{\|r_k\|}{\|r_{k-1}\|}=0$
of~\cite[Theorem 1]{moret1997} and \eqref{eq:stabrate}
gives the rate $\|r_k\|^\frac1k = O\bigl(k^{-\frac1p}\bigr)$
of~\cite[eq.~(1.1)]{moret1997}; only the estimate
\cite[ineq.~(2.7)]{moret1997} is finer than ours:
\begin{align*}
  \frac{\|r_k^{A,r_0}\|}{\|r_0\|}
  \leq
  \prod_{j=1}^k \sigma_j(A^{-1})\, \sigma_j(C)
  \leq
  \prod_{j=1}^k \|A^{-1}\|\, \sigma_j(C);
\end{align*}
the first inequality is due to Moret and the right-most
expression comes from the bound~\eqref{eq:stab}.

Under the conditions of Theorem~\ref{thm:stab},
Hansmann's theorem \cite[Theorem~2.1]{hansmann2011} provides
a~bound for the accumulation rate of discrete eigenvalues of~$A$
at the numerical range of~$B$. A~straightforward application
of the theorem gives the estimate
\begin{align*}
  \sum_{\lambda\in\sigma_\mathrm{disc}(A)}
  \dist\bigl(\lambda,\fovcl{B}\bigr)^p
  \leq
  \|C\|_{\mathcal{S}_p}^p
  \qquad
  \text{for any } p>1,
\end{align*}
where $\sigma_\mathrm{disc}(A)$ is the set of eigenvalues of~$A$
with finite algebraic multiplicity and each eigenvalue is counted
in the sum according to its
algebraic multiplicity. This estimate can be understood as the counterpart
to the perturbation result of Theorem~\ref{thm:stab} concerning
the perturbation of spectra\footnote{%
  Here, recall that
  concerning the finite-dimensional case
  the spectrum itself is not sufficient for obtaining any useful information
  about the behavior of GMRES \cite{greenbaum-strakos,greenbaum-ptak-strakos}.
  On the other hand, in the infinite-dimensional case the spectrum determines,
  in certain sense, the behavior of GMRES at high iteration counts $k\to\infty$;
  see Section~\ref{sec:outro} and \cite{nevanlinna1993}.
  The relationship between the finite-dimensional
  and infinite-dimensional cases is not, to our best knowledge, well established.
}.

Nevanlinna~\cite{nevanlinna1996,nevanlinna-2003,nevanlinna1993},
Malinen~\cite{malinen1996},
and Hyv\"{o}nen and Nevanlinna~\cite{hyvonen-nevanlinna-2000}
deal with GMRES for compactly perturbed operators
using techniques of complex analysis, which we completely
avoid in this paper.
On the other hand, Huhtanen and Nevanlinna~\cite{huhtanen-nevanlinna-2000} avoid
complex analysis, consider splittings
$A=B+C$ with $B$ normal and $C$ compact (or low-rank),
and obtain lower bounds on GMRES convergence speed.

The outline of the paper is as follows. In Section~\ref{sec:prelim}
we fix the notation and gather some basic facts. Section~\ref{sec:linear}
contains a~characterization of linear convergence and the
reduction factor~$M_B$. In Section~\ref{sec:proof},
building on Moret's result~\cite{moret1997},
we prove Theorem~\ref{thm:stab}. We briefly conclude
and mention interesting questions this research gave rise to
in Section~\ref{sec:outro}.

\section{Preliminaries}
\label{sec:prelim}
Throughout the paper we assume that $H$ is
a~complex Hilbert space with an~inner
product~$(\cdot,\cdot)$ which is linear and antilinear
in the first and second argument, respectively,
and the symbol ${\|\cdot\|}$ stands for
either the norm on~$H$ induced by $(\cdot,\cdot)$ or
the induced operator norm on~$\mathcal{L}(H)$, the space
of bounded linear operators on~$H$.
For operator $A\in\mathcal{L}(H)$, initial residual $r_0\in H$,
and integer $k=0,1,2,\ldots$ we define the Krylov subspace
$\mathcal{K}_k(A,r_0)\coloneqq
\operatorname{span}\{r_0,Ar_0,A^2r_0,\ldots,A^{k-1}r_0\}
\subset H$.
For a~right-hand side $b\in H$ and an~initial guess $x_0\in H$,
the initial residual is given by $r_0=b-Ax_0$.
Then the GMRES algorithm constructs a~sequence
$\{x_k\}_{k=1}^\infty\subset H$ given by minimizing the norm of
residuals $r_k^{A,r_0}=b-Ax_k$ over $x_k\in x_0+\mathcal{K}_k(A,r_0)$, i.e.,
\begin{align}
  \label{eq:gmreskrylov}
  x_k &= \argmin_{\substack{x_k\in H \\x_k-x_0\in\mathcal{K}_k(A,r_0)}} \|b-Ax_k\|.
\end{align}
We will write just $r_k$ instead of~$r_k^{A,r_0}$
if there is no risk of confusion.

\begingroup\setlength\emergencystretch{\hsize}\hbadness=10000
Assume that $t_1,t_2,\ldots,t_k$ is
the ascending orthonormal basis of the spaces $\mathcal{K}_k(A,r_0)$, $k=1,2,\ldots$, and
$z_1,z_2,\ldots,z_k$ is the ascending orthornormal basis of the spaces $A\mathcal{K}_k(A,r_0)$,
$k=1,2,\ldots$. This is well-defined~if
\begin{align*}
  \mathcal{K}_{k+1}(A,r_0) \supsetneq \mathcal{K}_k(A,r_0)
  \qquad \text{for all } k=1,2,\ldots.
\end{align*}
\endgroup
It is well-known that in the converse case, when
$\mathcal{K}_{m+1}(A,r_0)={}\allowbreak \mathcal{K}_{m}(A,r_0)
\supsetneq{}\allowbreak \mathcal{K}_{m-1}(A,r_0)\supsetneq{}\allowbreak \ldots$
for certain $m$, the solution has been reached, i.e., $Ax_m = b$,
provided that $A$ is invertible.
To see this, observe that $A\mathcal{K}_{m}(A,r_0)\subset\mathcal{K}_{m+1}(A,r_0)$
but at the same time
$\dim A\mathcal{K}_{m}(A,r_0)=\dim\mathcal{K}_{m}(A,r_0)=\dim\mathcal{K}_{m+1}(A,r_0)$
by the invertibility of $A$; hence
$A\mathcal{K}_{m}(A,r_0)=\mathcal{K}_{m+1}(A,r_0)\ni r_0$, i.e.,
$r_0=\hat p(A)Ar_0$ with some $\hat p\in\mathcal{P}_{m-1}$
which means that $r_m=r_0-\hat p(A)Ar_0=0$; here and throughout the paper
$\mathcal{P}_n$ stands for the space of polynomials of degree at most~$n$.

Hence, if $A$ is invertible, we can assume that $\{t_j\}_{j=1}^\infty$ and
$\{z_j\}_{j=1}^\infty$ are extended orthonormal systems, i.e.,
either (i) $\{t_j\}_{j=1}^\infty$ and $\{z_j\}_{j=1}^\infty$ are
orthonormal systems, or (ii) $\{t_j\}_{j=1}^\infty$ and $\{z_j\}_{j=1}^\infty$
are such that $\{t_j\}_{j=1}^m$ and $\{z_j\}_{j=1}^m$
are orthonormal systems and $\{t_j\}_{j=m+1}^\infty$ and $\{z_j\}_{j=m+1}^\infty$
are zero sequences. In both cases $t_1,t_2,\ldots,t_k$ and
$z_1,z_2,\ldots,z_k$ are orthonormal bases of $\mathcal{K}_k(A,r_0)$
and $A\mathcal{K}_k(A,r_0)$, respectively, for each $k\in\NN$.

Recall that the numerical range of a~bounded linear operator~$A$ on~$H$ is
a~subset of~$\CC$ given by
\begin{gather*}
  \fov A \coloneqq
  \bigl\{ (Az,z),\, z\in H,\, \|z\|=1 \bigr\}.
  \intertext{The distance of $\fov A$ to the origin is denoted}
  \nu_A \coloneqq
  \inf_{\lambda\in\fov A} |\lambda|.
\end{gather*}
The closure of the numerical range, $\fovcl A$, contains the spectrum
of~$A$. The Toeplitz--Hausdorff theorem
\cite[Theorem 9.3.1]{davies2007}, \cite{gustafson-1970}
says that $\fov A$ is a~convex set. The Lax--Milgram theorem guarantees that
$A$ is invertible provided $0\notin\fovcl A$ and in such a~situation
it holds that $\|A^{-1}\|\leq\nu_A^{-1}$.

\section{Characterization of linear GMRES convergence}
\label{sec:linear}
\begin{proposition}
  \label{thm:linear}
  Let $\opb$ be a~bounded linear operator on a~complex Hilbert space $H$.
  Further suppose that $\opb$ is invertible.
  The linear reduction factor~\eqref{eq:deflinear} fulfills
  \begin{align}
     \label{eq:bndM}
     M_\opb
     = \inf_{\lambda\in\CC} \|I-\lambda \opb\|
     = \inf_{\lambda\in\CC} \|I-\lambda \opb^{-1}\|
     \leq \sqrt{1 - \nu_\opb \nu_{\opb^{-1}}}
     \leq \sqrt{1 - \frac{\nu_\opb^2}{\|\opb\|^2}}
     .
  \end{align}
  Furthermore, the following assertions are equivalent:
  \begin{enumerate}[\normalfont(L1)]
    \item
      \label{L1}
      the operator $\opb$ exhibits
      linear GMRES convergence~\eqref{eq:deflinear2} with
      some $M_\opb<1$;
    \item
      \label{L2}
      $\nu_\opb > 0$;
    \item
      \label{L3}
      $\nu_{\opb^{-1}} > 0$.
  \end{enumerate}
  The following assertions are equivalent as well:
  \begin{enumerate}[\normalfont(M1)]
    \item
      \label{M1}
      the operator $\opb$ exhibits
      strictly monotone GMRES convergence~\eqref{eq:defmonotone};
    \item
      \label{M2}
      $0\notin\fov \opb$;
    \item
      \label{M3}
      $0\notin\fov \opb^{-1}$.
  \end{enumerate}
  If $H$ is finite-dimensional or $\opb$ is self-adjoint then
  \LREF{L1}, \LREF{L2}, \LREF{L3}, \MREF{M1}, \MREF{M2}, and \MREF{M3}
  are equivalent.
\end{proposition}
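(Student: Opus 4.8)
The plan is to reduce every assertion to the analysis of a \emph{single} GMRES step together with the convexity of the numerical range. Because $\opb$ is invertible we have $\opb z\neq0$ for $z\neq0$, and $r_1^{\opb,z}$ is the residual of the orthogonal projection of $z$ onto $\operatorname{span}\{\opb z\}$; hence for every unit vector $z$
\[
  \bigl\|r_1^{\opb,z}\bigr\|^2 = 1 - \frac{|(\opb z,z)|^2}{\|\opb z\|^2}.
\]
Writing $R_\opb \coloneqq \inf_{\|z\|=1}|(\opb z,z)|^2/\|\opb z\|^2$, this gives $\sup_{\|z\|=1}\|r_1^{\opb,z}\|=\sqrt{1-R_\opb}$, and since the $k=1$ ratio is one of those entering \eqref{eq:deflinear} we obtain $M_\opb\geq\sqrt{1-R_\opb}$. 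For the matching upper bound I would use the polynomial-multiplication trick: if $r_{k-1}^{\opb,z}=q(\opb)z$ with $q\in\mathcal{P}_{k-1}$, $q(0)=1$, then $(1-\lambda\zeta)q(\zeta)$ is admissible at step $k$, so $\|r_k^{\opb,z}\|\le\|(I-\lambda\opb)r_{k-1}^{\opb,z}\|\le\|I-\lambda\opb\|\,\|r_{k-1}^{\opb,z}\|$ for every $\lambda$; taking the infimum over $\lambda$ and then the supremum over $z$ and $k$ yields $M_\opb\le\inf_\lambda\|I-\lambda\opb\|$. Finally the elementary minimax inequality $\inf_\lambda\sup_z\geq\sup_z\inf_\lambda$ applied to $\|(I-\lambda\opb)z\|^2$ gives $\inf_\lambda\|I-\lambda\opb\|^2\geq 1-R_\opb$. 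Thus the whole first line of \eqref{eq:bndM} follows once I prove the reverse minimax $\inf_\lambda\|I-\lambda\opb\|^2\le 1-R_\opb$.

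The main obstacle is exactly this reverse inequality, i.e.\ the statement that for a single step the worst-case reduction equals the ``ideal'' reduction $\inf_\lambda\|I-\lambda\opb\|$. I would attack it by writing $\|I-\lambda\opb\|^2 = \sup_\rho\operatorname{tr}\bigl(\rho\,(I-\lambda\opb)^*(I-\lambda\opb)\bigr)$, the supremum being over density operators $\rho$; the integrand is convex in $\lambda$ and affine in $\rho$, so a minimax theorem lets me exchange $\inf_\lambda$ and $\sup_\rho$, giving $\inf_\lambda\|I-\lambda\opb\|^2 = \sup_\rho\bigl(1-|\operatorname{tr}(\rho\opb)|^2/\operatorname{tr}(\rho\opb^*\opb)\bigr)$. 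The crux of the whole proposition is then to show that this last supremum is attained at a \emph{pure} state $\rho=z z^*$, so that it collapses to $1-R_\opb$; equivalently, that minimising the convex ratio $|\operatorname{tr}(\rho\opb)|^2/\operatorname{tr}(\rho\opb^*\opb)$ over states is achieved on the rank-one extreme points. In infinite dimensions I would first carry this out on finite-dimensional compressions and pass to the limit, since neither the supremum over $z$ nor an optimal $\lambda$ need be attained.

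With the identity $M_\opb=\inf_\lambda\|I-\lambda\opb\|=\sqrt{1-R_\opb}$ in hand, the remaining formulas follow by elementary substitutions. For the symmetry I would substitute $w=\opb z$: then $(\opb^{-1}w,w)=\overline{(\opb z,z)}$, and a short computation of the ratio yields $R_{\opb^{-1}}=R_\opb$, whence $\inf_\lambda\|I-\lambda\opb^{-1}\|=\sqrt{1-R_\opb}$. The same substitution gives, for unit $w$, $|(\opb^{-1}w,w)|\geq\|\opb^{-1}w\|^2\,\nu_\opb\geq\nu_\opb/\|\opb\|^2$, i.e.\ $\nu_{\opb^{-1}}\geq\nu_\opb/\|\opb\|^2$, which is precisely the last inequality in \eqref{eq:bndM}. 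For the Elman bound I would rotate so that the point of $\fovcl\opb$ nearest $0$ sits at $\nu_\opb$ on the positive real axis (admissible since $\inf_\lambda\|I-\lambda\opb\|$ is invariant under $\opb\mapsto e^{i\theta}\opb$); convexity of $\fov\opb$ then forces $\real(\opb z,z)\geq\nu_\opb$, and inserting $\lambda_0=\nu_\opb/\|\opb\|^2$ into $\|(I-\lambda_0\opb)z\|^2=1-2\lambda_0\real(\opb z,z)+\lambda_0^2\|\opb z\|^2$ gives $M_\opb\le\sqrt{1-\nu_\opb^2/\|\opb\|^2}$; the sharper $\sqrt{1-\nu_\opb\nu_{\opb^{-1}}}$ is the cited Starke--Eiermann--Ernst refinement.

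For the equivalences I would argue as follows. \LREF{L2}$\Rightarrow$\LREF{L1}: if $\nu_\opb>0$ then $\nu_{\opb^{-1}}>0$ by the inequality above, so $M_\opb\le\sqrt{1-\nu_\opb\nu_{\opb^{-1}}}<1$. \LREF{L1}$\Rightarrow$\LREF{L2}: if $\nu_\opb=0$, pick unit $z_n$ with $(\opb z_n,z_n)\to0$; since $\|\opb z_n\|$ stays bounded below the single-step formula gives $\|r_1^{\opb,z_n}\|\to1$, hence $M_\opb=1$, contradicting \LREF{L1}. \LREF{L2}$\Leftrightarrow$\LREF{L3} is immediate from $\nu_{\opb^{-1}}\geq\nu_\opb/\|\opb\|^2$ and its symmetric counterpart. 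For the monotone chain, the single-step formula shows that $\|r_1^{\opb,z}\|<\|z\|$ for all $z\neq0$ holds iff $(\opb z,z)\neq0$ for all $z\neq0$, i.e.\ \MREF{M1} at $k=1$ is exactly \MREF{M2}; the polynomial bound $\|r_k^{\opb,z}\|\le\min_\lambda\|(I-\lambda\opb)r_{k-1}^{\opb,z}\|<\|r_{k-1}^{\opb,z}\|$ then propagates strict decrease to every $k$, giving \MREF{M2}$\Rightarrow$\MREF{M1}, while \MREF{M2}$\Leftrightarrow$\MREF{M3} follows from $(\opb^{-1}w,w)=\overline{(\opb z,z)}$ with $w=\opb z$. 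Finally, when $H$ is finite-dimensional or $\opb$ is self-adjoint the numerical range is already closed, so $0\in\fov\opb\Leftrightarrow0\in\fovcl\opb$, i.e.\ \MREF{M2}$\Leftrightarrow$\LREF{L2}, and the two chains merge.
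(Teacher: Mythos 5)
Your overall architecture (one-step formula, polynomial-multiplication upper bound, substitution $w=\opb z$ for the symmetry $\opb\leftrightarrow\opb^{-1}$, and the chain of equivalences) coincides with the paper's, but there are two genuine gaps. The first and most serious is the reverse minimax inequality $\inf_{\lambda}\|I-\lambda\opb\|\le\sup_{\|z\|=1}\inf_{\lambda}\|z-\lambda\opb z\|$, which you correctly identify as the crux but do not prove. The paper disposes of it by citing the Asplund--Pt\'ak minimax theorem; your proposed substitute --- write $\|I-\lambda\opb\|^2=\sup_\rho\operatorname{tr}\bigl(\rho\,(I-\lambda\opb)^*(I-\lambda\opb)\bigr)$, swap $\inf_\lambda$ and $\sup_\rho$, then argue that the resulting supremum over states is attained at pure states --- stalls exactly at the last step. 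The functional $\rho\mapsto|\operatorname{tr}(\rho\opb)|^2/\operatorname{tr}(\rho\opb^*\opb)$ is \emph{convex} in $\rho$ (a perspective-type function composed with a linear map), and you need its \emph{infimum} over the convex set of states to be attained on the extreme points; the extreme-point principle you invoke applies to maximizing convex functions, not minimizing them (cf.\ $x\mapsto x^2$ on $[-1,1]$). So this step is not a routine verification: it is essentially equivalent to the Asplund--Pt\'ak theorem itself and must either be proved by a genuinely different argument or replaced by the citation. A smaller instance of the same habit: the bound $M_\opb\le\sqrt{1-\nu_\opb\nu_{\opb^{-1}}}$ is part of \eqref{eq:bndM} and you defer it to a citation, although it follows in one line from your own formula via $\inf_z\frac{|(\opb z,z)|^2}{\|\opb z\|^2\|z\|^2}\ge\inf_z\frac{|(\opb z,z)|}{\|z\|^2}\cdot\inf_z\frac{|(\opb z,z)|}{\|\opb z\|^2}=\nu_\opb\nu_{\opb^{-1}}$, which is how the paper does it.

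The second gap is in the final paragraph: the assertion that the numerical range of a self-adjoint operator is closed is false. For example, a diagonal self-adjoint invertible operator with eigenvalues $\tfrac12+2^{-k}$, $k\in\NN$, has $\fov\opb=(\tfrac12,1]$. What saves the equivalence \LREF{L2}$\Leftrightarrow$\MREF{M2} in the self-adjoint case is not closedness but the fact that $\fov\opb$ is a real interval whose endpoints (the points where closure can fail) belong to the spectrum: if $\nu_\opb=0$ and $0\notin\fov\opb$, then $0$ is an endpoint of the interval $\fov\opb$, hence $0\in\sigma(\opb)$, contradicting invertibility. This is the argument the paper gives, and you need it (or something like it) rather than the closedness claim. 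The remaining parts of your proposal --- the derivation of \eqref{eq:stepone}, the identity $R_{\opb^{-1}}=R_\opb$, the inequality $\nu_{\opb^{-1}}\ge\nu_\opb/\|\opb\|^2$, the implications among \LREF{L1}--\LREF{L3} and \MREF{M1}--\MREF{M3}, and the finite-dimensional case --- are correct and match the paper's proof.
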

\begin{proof}
  From~\eqref{eq:gmreskrylov} we have, for all $\lambda\in\CC$,
  $r_0\in H$, and $k\in\NN$,
  \begin{align*}
    \|r_k\|
    = \min_{\substack{p\in\mathcal{P}_k \\ p(0)=1}} \|p(\opb)r_0\|
    \leq \|I-\lambda \opb\|
    \min_{\substack{p\in\mathcal{P}_{k-1} \\ p(0)=1}} \|p(\opb)r_0\|
    = \|I-\lambda \opb\|\, \|r_{k-1}\|.
  \end{align*}
  Dividing by $\|r_{k-1}\|$,
  taking the infimum over $\lambda\in\CC$ and the supremum
  over $r_0\in H$ and $k\in\NN$, we immediately obtain
  $M_\opb\leq\inf_{\lambda\in\CC}\|I-\lambda \opb\|$.
  We continue by showing the opposite inequality.
  The minimax theorem of Asplund and Pt\'{a}k~\cite{asplund-ptak-1971}
  says that\footnote{%
    The equality~\eqref{eq:minimax} expresses the ``equivalence
    of true GMRES and ideal GMRES in the first step.'' This topic
    has received considerable attention in the GMRES literature; see, e.g.,
    \cite{greenbaum-gurvits-1994,greenbaum-trefethen-1994,joubert-1994}.
    For our purpose, in the context of operators on a~Hilbert space,
    we refer to~\cite{asplund-ptak-1971},
    where \eqref{eq:minimax} has been demonstrated to hold true
    if and only if $H$ is an~inner-product space,
    possibly infinite-dimensional.
  }
  \begin{align}
    \label{eq:minimax}
    \inf_{\lambda\in\CC} \|I-\lambda \opb\|
    = \inf_{\lambda\in\CC} \sup_{\substack{z\in H \\ \|z\|=1}} \|z-\lambda \opb z\|
    = \sup_{\substack{z\in H \\ \|z\|=1}} \inf_{\lambda\in\CC} \|z-\lambda \opb z\|
    .
  \end{align}
  The minimization on the right hand-side is nothing other than
  the first GMRES step so that
  $\inf_{\lambda\in\CC} \|I-\lambda \opb\|=
  \sup_{r_0\in H} \frac{\|r_1\|}{\|r_0\|}\leq M_\opb$
  and the first equality in~\eqref{eq:bndM} is thus proved.
  Furthermore, the minimization on the right-hand side of~\eqref{eq:minimax}
  can be carried out explicitly:
  for any $z\in H$, $z\neq0$ it holds that
  \begin{align}
    \label{eq:stepone}
    \min_{\lambda\in\CC} \frac{\|z-\lambda \opb z\|^2}{\|z\|^2}
    = 1 - \frac{|(\opb z,z)|^2}{\|\opb z\|^2 \|z\|^2}
  \end{align}
  and the minimum is attained for $\lambda=\frac{(z,\opb z)}{\|\opb z\|^2}$.
  Equations~\eqref{eq:minimax} and~\eqref{eq:stepone} imply
  \begin{subequations}
    \label{eq:stepone2}
    \begin{align}
      \label{eq:stepone2a}
      \inf_{\lambda\in\CC} \|I-\lambda \opb\|
      &= \sup_{z\in H}
         \sqrt{1-\frac{|(\opb z,z)|^2}{\|\opb z\|^2 \|z\|^2}},
      \\
      \label{eq:stepone2b}
      \inf_{\lambda\in\CC} \|I-\lambda \opb^{-1}\|
      &= \sup_{z\in H}
         \sqrt{1-\frac{|(\opb^{-1}z,z)|^2}{\|\opb^{-1}z\|^2 \|z\|^2}},
    \end{align}
  \end{subequations}
  but by virtue of the invertibility of~$\opb$,
  the right-hand sides in~\eqref{eq:stepone2} are the same, which shows
  the second equality in~\eqref{eq:bndM}.
  Using the inequalities
  \begin{subequations}
    \label{eq:ineq}
    \begin{gather}
      \label{eq:ineqa}
      \inf_{z\in H} \frac{|(\opb z,z)|^2}{\|\opb z\|^2 \|z\|^2}
      \geq \inf_{z\in H} \frac{|(\opb z,z)|}{\|z\|^2}
           \inf_{z\in H} \frac{|(\opb z,z)|}{\|\opb z\|^2}
      = \nu_\opb \nu_{\opb^{-1}},
      \\
      \label{eq:ineqb}
      \nu_{\opb^{-1}}
      = \inf_{z\in H} \frac{|(\opb^{-1}\opb z,\opb z)|}{\|\opb z\|^2}
      \geq \inf_{z\in H} \frac{|(z,\opb z)|}{\|\opb\|^2 \|z\|^2}
      = \frac{\nu_\opb}{\|\opb\|^2},
      \\
      \label{eq:ineqc}
      \nu_\opb
      = \inf_{z\in H} \frac{|(\opb\opb^{-1} z,\opb^{-1} z)|}{\|\opb^{-1} z\|^2}
      \geq \inf_{z\in H} \frac{|(z,\opb^{-1} z)|}{\|\opb^{-1}\|^2 \|z\|^2}
      = \frac{\nu_{\opb^{-1}}}{\|\opb^{-1}\|^2},
    \end{gather}
  \end{subequations}
  together with~\eqref{eq:stepone2}
  immediately yields both inequalities in~\eqref{eq:bndM}.
  Thus~\eqref{eq:bndM} is proved.

  The inequalities \eqref{eq:ineqb}, \eqref{eq:ineqc} establish the equivalence
  of~\LREF{L2} and~\LREF{L3}. The assertion~\LREF{L2}
  or~\LREF{L3} implies~\LREF{L1} by~\eqref{eq:bndM}.
  We proceed by showing that~\LREF{L1} implies~\LREF{L2}.
  Assume that~\LREF{L2} is violated, i.e., that $\nu_\opb=0$.
  Hence there exists $\{z_j\}_{j=1}^\infty\subset H$ such that
  $\|z_j\|=1$ and $(\opb z_j,z_j)\rightarrow0$ as $j\rightarrow\infty$.
  Therefore $\frac{|(\opb z_j,z_j)|}{\|\opb z_j\|} \leq \|\opb^{-1}\|\, |(\opb z_j,z_j)|
  \rightarrow0$. Thus by~\eqref{eq:stepone} we obtain
  \begin{gather*}
    \inf_{\lambda\in\CC} \|z_j-\lambda \opb z_j\|^2
    = 1 - \frac{|(\opb z_j,z_j)|^2}{\|\opb z_j\|^2}
    \rightarrow 1.
    \shortintertext{Thus}
    \sup_{\substack{r_0\in H \\ \|r_0\|=1}}
    \inf_{\lambda\in\CC} \|r_0-\lambda \opb r_0\|
    = 1
  \end{gather*}
  so that $M_\opb=1$ and implication \LREF{L1}$\Rightarrow$\LREF{L2}
  is proved.

  The equivalence of~\MREF{M2} and~\MREF{M3} follows directly
  from the definition of numerical range owing to the invertibility
  of~$\opb$. We proceed by showing that~\MREF{M2} implies~\MREF{M1}.
  We fix $k\in\NN$. If $r_{k-1}=0$ we are done so let us assume
  the converse. We have
  \begin{align*}
    \frac{\|r_k\|}{\|r_{k-1}\|}
    = \min_{\substack{p\in\mathcal{P}_k \\ p(0)=1}}
      \frac{\|p(\opb)r_0\|}{\|r_{k-1}\|}
    \leq \frac{\|(I-\lambda \opb)r_{k-1}\|}{\|r_{k-1}\|}
  \end{align*}
  for any $\lambda\in\CC$. Hence with $\lambda$ minimizing
  the right-hand side we obtain, by~\eqref{eq:stepone}
  and~\MREF{M2},
  \begin{align*}
    \frac{\|r_k\|}{\|r_{k-1}\|} \leq
    \sqrt{1-\frac{|(\opb r_{k-1},r_{k-1})|^2}{\|\opb r_{k-1}\|^2 \|r_{k-1}\|^2}}
    < 1
  \end{align*}
  so that~\MREF{M1} follows.
  On the other hand, if~\MREF{M2} is violated
  then there exists
  $r_0\in H$ with $r_0\neq0$ and $(\opb r_0,r_0)=0$
  so that~\eqref{eq:stepone} immediately implies
  $\|r_1\|=\|r_0\|$, which contradicts~\MREF{M1}.
  Hence the equivalence of~\MREF{M1} and~\MREF{M2}
  is now clear.

  If $H$ is finite-dimensional then $\fov \opb$
  is closed and thus \LREF{L2} and \MREF{M2} are equivalent.
  Now assume $\opb$ is self-adjoint. Clearly \LREF{L2} implies
  \MREF{M2}. To prove the opposite, assume that $\nu_\opb=0$
  but $0\notin\fov \opb$. As $\fov \opb$ is real and convex, it must be
  of the form $[a,0)$, $(a,0)$, $(0,b)$, or $(0,b]$ with
  some $a<0$ or $b>0$. On the other hand, the endpoints of
  $\fov \opb$ belong to the spectrum of~$\opb$
  \cite[Satz~1]{hildebrandt-1966},
  which is always
  closed. Hence the conclusion is that $0$ is in the spectrum,
  which contradicts the invertibility of~$\opb$.
\end{proof}

\begin{remark}
  \label{rem:example}
  We give an~example of a~unitary operator fulfilling~\MREF{M1}
  but contradicting~\LREF{L1}, thus showing that~\LREF{L1}
  and~\MREF{M1} are not in general equivalent.
  Let $\opb$ be a~diagonal operator on~$\ell^2$
  given by
  \begin{align*}
    \opb = \sum_{j\in\ZZ} \lambda_j e_j (\cdot,e_j),
    \qquad
    \lambda_j=e^{i\arctan j} \text{ for }j\in\ZZ,
  \end{align*}
  where $\{e_j\}_{j\in\ZZ}$ is the canonical basis in~$\ell^2$,
  which is orthonormal with respect to $(\cdot,\cdot)$.
  The eigenvalues $\{\lambda_j\}_{j\in\ZZ}$ fulfill
  $|\lambda_j|=1$ and $\real\lambda_j>0$ and,
  indeed, $\opb$ is unitary and hence invertible. In fact,
  the spectrum of~$\opb$ is $\{\lambda_j\}_{j\in\ZZ} \cup \{i,-i\}$.
  Indeed, $\|(\opb\mp iI)e_k\|=|\lambda_k\mp i|\rightarrow0$
  as $k\rightarrow\pm\infty$, i.e., $\pm i$ is in
  the approximate point spectrum of~$\opb$.
  For $f_k\coloneqq e_k+e_{-k}$ it holds that
  $\frac{(\opb f_k,f_k)}{\|f_k\|^2}=\real\lambda_k\rightarrow0$
  as $k\rightarrow\pm\infty$. Hence $\nu_\opb=0$, and thus
  \LREF{L2}, and in turn \LREF{L1}, do not hold true.
  On the other hand, $(\opb z,z)=\sum_{j\in\ZZ}\lambda_j|(z,e_j)|^2$
  which is non-zero whenever $z\neq0$, because
  $\real\lambda_j>0$ for all $j\in\ZZ$. Thus $0\notin\fov \opb$,
  which is~\MREF{M2} so that~\MREF{M1} holds true.
\end{remark}

The following result characterizes $\lambda$ that minimize~\eqref{eq:bndM}.
\begin{proposition}
  \label{lemma:minimizerlambda}
  Let $H$ be a~complex Hilbert space and $\opb\in\mathcal{L}(H)$ be invertible.
  There exist $\lambda_\opb\in\CC$ and $\lambda_{\opb^{-1}}\in\CC$ such that the infima
  on the left-hand sides of~\eqref{eq:stepone2a} and~\eqref{eq:stepone2b},
  respectively, are attained.
  Any such $\lambda_\opb$ and $\lambda_{\opb^{-1}}$ fulfill
  \begin{align*}
    \lambda_\opb
    &\in\fovcl \opb^{-1},
    &
    \|\lambda_\opb \opb\|
    &\leq 1 + M_\opb,
    \\
    \lambda_{\opb^{-1}}
    &\in\fovcl \opb,
    &
    \|\lambda_{\opb^{-1}} \opb^{-1}\|
    &\leq 1 + M_\opb.
  \end{align*}
\end{proposition}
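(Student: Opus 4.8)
The plan is to establish the existence of minimizers by a compactness/weak-limit argument, and then to characterize them by first-order optimality conditions. I focus on $\lambda_\opb$ minimizing $\|I-\lambda\opb\|$; the statement for $\lambda_{\opb^{-1}}$ follows by applying the same reasoning to $\opb^{-1}$ in place of $\opb$, since $\opb^{-1}$ is also invertible and $M_{\opb^{-1}}=M_\opb$ by~\eqref{eq:bndM}.

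\emph{Existence.} First I would show that the infimum $\inf_{\lambda\in\CC}\|I-\lambda\opb\|$ is attained. The map $\lambda\mapsto\|I-\lambda\opb\|$ is continuous and convex on $\CC$, and it is coercive: since $\opb$ is invertible, $\|I-\lambda\opb\|\geq|\lambda|\,\|\opb^{-1}\|^{-1}-1\to\infty$ as $|\lambda|\to\infty$ (using $\|\opb w\|\geq\|\opb^{-1}\|^{-1}\|w\|$). Hence the minimization may be restricted to a compact disk in $\CC$, on which the continuous function attains its minimum. This yields a minimizer $\lambda_\opb$.

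\emph{The norm bound.} Given a minimizer $\lambda_\opb$, the bound $\|\lambda_\opb\opb\|\leq1+M_\opb$ follows directly from the triangle inequality: $\|\lambda_\opb\opb\|=\|I-(I-\lambda_\opb\opb)\|\leq\|I\|+\|I-\lambda_\opb\opb\|=1+M_\opb$, where I use $\|I-\lambda_\opb\opb\|=M_\opb$ by~\eqref{eq:bndM}. The analogous computation gives $\|\lambda_{\opb^{-1}}\opb^{-1}\|\leq1+M_\opb$.

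\emph{The membership $\lambda_\opb\in\fovcl{\opb^{-1}}$.} This is the main obstacle and the substance of the proposition. I expect to derive it from a first-order optimality condition. From~\eqref{eq:stepone}, the pointwise-optimal scalar for a \emph{fixed} unit vector $z$ is $\frac{(z,\opb z)}{\|\opb z\|^2}$, whose reciprocal structure relates to $\fov{\opb^{-1}}$. The idea is that if $\lambda_\opb$ is a global minimizer of the supremum $\sup_{\|z\|=1}\|z-\lambda\opb z\|$, then by the minimax equality~\eqref{eq:minimax} there must be a maximizing direction $z$ (or a maximizing sequence $z_j$, in the infinite-dimensional case where the supremum need not be attained) for which $\lambda_\opb$ agrees with the pointwise optimum $\frac{(z,\opb z)}{\|\opb z\|^2}$. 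Setting $w=\opb z$ gives $\lambda_\opb=\frac{(\opb^{-1}w,w)}{\|w\|^2}\in\fov{\opb^{-1}}$; passing to the limit along a maximizing sequence places $\lambda_\opb$ in the closure $\fovcl{\opb^{-1}}$. The delicate point is making the ``agrees with the pointwise optimum'' step rigorous when no exact maximizer exists: I would argue that along a sequence $z_j$ with $\|z_j-\lambda_\opb\opb z_j\|\to M_\opb$, the pointwise optima $\frac{(z_j,\opb z_j)}{\|\opb z_j\|^2}$ must converge to $\lambda_\opb$, since otherwise replacing $\lambda_\opb$ by these better pointwise values would, via a convexity or perturbation estimate on $\|z_j-\lambda\opb z_j\|^2$ as a function of $\lambda$, strictly decrease the supremum below $M_\opb$, contradicting optimality. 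The quantities $\frac{(\opb^{-1}w_j,w_j)}{\|w_j\|^2}$ with $w_j=\opb z_j$ then lie in $\fov{\opb^{-1}}$ and converge to $\lambda_\opb$, giving $\lambda_\opb\in\fovcl{\opb^{-1}}$.
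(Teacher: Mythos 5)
Your existence argument (coercivity of $\lambda\mapsto\|I-\lambda\opb\|$ plus compactness of a disk) is correct and is a legitimate alternative to the paper's route, and the triangle-inequality bound $\|\lambda_\opb\opb\|\leq1+M_\opb$ matches the paper. The gap is in the membership step, which you yourself flag as the delicate point. Your central claim --- that along a sequence $z_j$ with $\|z_j-\lambda_\opb\opb z_j\|\to M_\opb$ the pointwise optima $\lambda_{z_j}=\frac{(z_j,\opb z_j)}{\|\opb z_j\|^2}$ must converge to $\lambda_\opb$ --- is false. Take $\opb=\operatorname{diag}(1,-1)$ on $\CC^2$: then $\|I-\lambda\opb\|=\max(|1-\lambda|,|1+\lambda|)$ is minimized only at $\lambda_\opb=0$ with $M_\opb=1$, and $z_j=e_1$ is a maximizing sequence ($\|z_j-0\cdot\opb z_j\|=1=M_\opb$), yet $\lambda_{z_j}=1\not\to0$. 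The justification you offer is also a non sequitur: replacing $\lambda_\opb$ by $\lambda_{z_j}$ improves the value at the single vector $z_j$ but may worsen it at other unit vectors, so it says nothing about the supremum and yields no contradiction with optimality.

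The step can be repaired in either of two ways. The paper argues by contradiction \emph{uniformly over all} $z$: if $C:=\dist(\lambda_\opb,\fov{\opb^{-1}})>0$, then the exact identity
\begin{align*}
\|z-\lambda_\opb\opb z\|^2=\|z-\lambda_z\opb z\|^2+\|\opb z\|^2\,|\lambda_\opb-\lambda_z|^2
\geq\|z-\lambda_z\opb z\|^2+\|\opb^{-1}\|^{-2}C^2
\end{align*}
holds for every unit $z$; taking the supremum and invoking the minimax identity~\eqref{eq:minimax} gives $\|I-\lambda_\opb\opb\|^2\geq M_\opb^2+\|\opb^{-1}\|^{-2}C^2>M_\opb^2$, contradicting minimality. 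Alternatively, your sequence idea works if you pick the \emph{right} sequence: take $z_j$ maximizing for the sup--inf side of~\eqref{eq:minimax}, i.e.\ $\|z_j-\lambda_{z_j}\opb z_j\|^2\to M_\opb^2$. Then since $\|z_j-\lambda_\opb\opb z_j\|^2\leq M_\opb^2$, the same identity gives $\|\opb^{-1}\|^{-2}|\lambda_\opb-\lambda_{z_j}|^2\leq\|\opb z_j\|^2|\lambda_\opb-\lambda_{z_j}|^2\leq M_\opb^2-\|z_j-\lambda_{z_j}\opb z_j\|^2\to0$, so $\lambda_{z_j}\to\lambda_\opb$ with $\lambda_{z_j}\in\fov{\opb^{-1}}$. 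Note the inequality runs in the opposite direction to the one you need for your choice of sequence; that directional issue is exactly where your sketch breaks.
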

\begin{proof}
  We will first prove that any minimizer of~\eqref{eq:stepone2a}, if it
  exists, fulfills $\lambda_\opb\in\fovcl{\opb^{-1}}$. For the sake of contradiction
  assume that $\lambda_\opb\in\CC\setminus\fovcl{\opb^{-1}}$. Let $z\in H$ with
  $\|z\|=1$ be arbitrary. The function $\lambda\mapsto\|z-\lambda \opb z\|^2$
  is minimized by $\lambda_z = \frac{(z,\opb z)}{\|\opb z\|^2}\in\fov{\opb^{-1}}$;
  see \eqref{eq:stepone}. Hence $|\lambda_\opb-\lambda_z|\geq C>0$,
  where $C=\dist(\lambda_\opb,\fov{\opb^{-1}})$ is independent of~$z$.
  Now consider that
  \begin{align*}
    \|z-\lambda_\opb \opb z\|^2 &=
    1 - 2\real\bigl( \lambda_\opb (\opb z,z) \bigr) + |\lambda_\opb|^2\,\|\opb z\|^2,
    \\
    \|z-\lambda_z \opb z\|^2 &=
    1 - \frac{|(\opb z,z)|^2}{\|\opb z\|^2},
  \end{align*}
  where the second equality has been already shown in~\eqref{eq:stepone}.
  Combining the last two equalities we obtain
  \begin{gather*}
    \begin{aligned}
      \|z-\lambda_\opb \opb z\|^2 - \|z-\lambda_z \opb z\|^2
      &= \frac{|(\opb z,z)|^2}{\|\opb z\|^2}
      - 2\real\bigl( \lambda_\opb (\opb z,z) \bigr) + |\lambda_\opb|^2\,\|\opb z\|^2
      \\
      &= \|\opb z\|^2\, |\lambda_\opb-\lambda_z|^2
      \geq \|\opb^{-1}\|^{-2}\, C^2.
    \end{aligned}
    \shortintertext{Hence}
    \begin{aligned}
      \|I-\lambda_\opb \opb\|^2 = \sup_{\|z\|=1} \|z-\lambda_\opb \opb z\|^2
      &\geq \sup_{\|z\|=1} \|z-\lambda_z \opb z\|^2 + \|\opb^{-1}\|^{-2}\, C^2
      \\
      &> \sup_{\|z\|=1} \|z-\lambda_z \opb z\|^2
      = \inf_{\lambda\in\CC} \|I-\lambda \opb\|^2,
    \end{aligned}
  \end{gather*}
  where we used~\eqref{eq:minimax} in the last equality,
  and this gives the desired contradiction.
  Hence any minimizers of~\eqref{eq:stepone2a} belong
  to the compact set $\fovcl{\opb^{-1}}$, where  the function
  to be minimized is continuous, so the existence is also proved.
  By the triangle inequality we have
  $\|\lambda_\opb \opb\| \leq \|I\| + \|I-\lambda_\opb \opb\| = 1 + M_\opb$.
  The proof for $\lambda_{\opb^{-1}}$ is analogous.
\end{proof}

\section{Proof of the main result}
\label{sec:proof}
Moret~\cite[Lemma 6]{moret1997}, as well as
Eiermann and Ernst~\cite[Lemma 3.3]{EiermannErnst2001},
proves the following auxiliary result,
which will be crucial for us.
\begingroup\setlength\emergencystretch{\hsize}\hbadness=10000
\begin{lemma}[Moret's formula]
  \label{lemma:moret}
  Let $H$ be a~separable complex Hilbert space,
  $A\in\mathcal{L}(H)$ be invertible,
  and $r_0\in H$ be given.
  For every $k\in\NN$ and $\lambda\in\CC$ the GMRES residuals
  $r_k \coloneqq r_k^{A,r_0}$ fulfill
  \begin{align}
    \label{eq:moret}
    \|r_k\|
    = |(t_{k+1},z_k)| \|r_{k-1}\|
    = |(t_{k+1},(I-\lambda A^{-1})z_k)| \|r_{k-1}\|,
  \end{align}
  where $\{t_j\}_{j=1}^\infty$ and $\{z_j\}_{j=1}^\infty$
  are the ascending extended orthonormal bases of
  $\{\mathcal{K}_1(A,r_0),\allowbreak \mathcal{K}_2(A,r_0),\allowbreak \ldots\}$
  and
  $\{A\mathcal{K}_1(A,r_0),\allowbreak A\mathcal{K}_2(A,r_0),\allowbreak \ldots\}$,
  respectively.
\end{lemma}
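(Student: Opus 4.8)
The plan is to reduce~\eqref{eq:moret} to an elementary two-dimensional orthogonality computation. First I would recast the GMRES residual as an orthogonal projection: since $A(x_k-x_0)$ is the best approximation of $r_0$ from $A\mathcal{K}_k(A,r_0)=\operatorname{span}\{z_1,\dots,z_k\}$, the residual is $r_k=(I-\Pi_k)r_0$, where $\Pi_k$ is the orthogonal projection onto $A\mathcal{K}_k(A,r_0)$. Because $A\mathcal{K}_{k-1}(A,r_0)\subset A\mathcal{K}_k(A,r_0)$ one obtains $r_k=(I-\Pi_k)r_{k-1}$, and using the splitting $A\mathcal{K}_k(A,r_0)=A\mathcal{K}_{k-1}(A,r_0)\oplus\operatorname{span}\{z_k\}$ together with the GMRES orthogonality $r_{k-1}\perp A\mathcal{K}_{k-1}(A,r_0)$, this collapses to the single-step update
\begin{align*}
  r_k = r_{k-1} - (r_{k-1},z_k)\,z_k,
  \qquad
  \|r_k\|^2 = \|r_{k-1}\|^2\bigl(1-|(u_{k-1},z_k)|^2\bigr),
\end{align*}
where $u_{k-1}\coloneqq r_{k-1}/\|r_{k-1}\|$ (assuming $r_{k-1}\neq0$, since otherwise both sides of~\eqref{eq:moret} vanish). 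Thus the first equality in~\eqref{eq:moret} is equivalent to the identity $1-|(u_{k-1},z_k)|^2=|(t_{k+1},z_k)|^2$.

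The heart of the argument is to prove this identity geometrically. I would set $W\coloneqq A\mathcal{K}_{k-1}(A,r_0)$, a subspace of dimension $k-1$ contained in \emph{both} $\mathcal{K}_k(A,r_0)$ and $A\mathcal{K}_k(A,r_0)$, and hence in $\mathcal{K}_{k+1}(A,r_0)$; since $\dim\mathcal{K}_{k+1}(A,r_0)=k+1$, the orthogonal complement $V\coloneqq\mathcal{K}_{k+1}(A,r_0)\ominus W$ is two-dimensional. Then I would verify that the three relevant unit vectors all lie in $V$: the direction $u_{k-1}$ because $r_{k-1}\in\mathcal{K}_k(A,r_0)$ and $r_{k-1}\perp W$; the vector $z_k$ because $z_k\in A\mathcal{K}_k(A,r_0)$ and $z_k\perp A\mathcal{K}_{k-1}(A,r_0)=W$; and $t_{k+1}$ because $t_{k+1}\perp\mathcal{K}_k(A,r_0)\supset W$. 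Finally, $t_{k+1}\perp\mathcal{K}_k(A,r_0)\ni r_{k-1}$ gives $t_{k+1}\perp u_{k-1}$, so $\{u_{k-1},t_{k+1}\}$ is an orthonormal basis of the plane $V$. Expanding the unit vector $z_k\in V$ in this basis (Pythagoras) yields $|(u_{k-1},z_k)|^2+|(t_{k+1},z_k)|^2=1$, which is exactly the required identity and completes the first equality.

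The second equality is then immediate: $A^{-1}z_k\in A^{-1}\bigl(A\mathcal{K}_k(A,r_0)\bigr)=\mathcal{K}_k(A,r_0)$, whereas $t_{k+1}\perp\mathcal{K}_k(A,r_0)$, so $(t_{k+1},A^{-1}z_k)=0$ and therefore $(t_{k+1},(I-\lambda A^{-1})z_k)=(t_{k+1},z_k)$ for every $\lambda\in\CC$; taking moduli finishes the proof. The only remaining bookkeeping is the terminated case, where $\mathcal{K}_{m+1}(A,r_0)=\mathcal{K}_m(A,r_0)$ forces $r_m=0$ and $t_{m+1}=0$ under the extended-basis convention, so both sides vanish. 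I expect the main obstacle to be precisely the dimension count and the verification that $u_{k-1}$, $z_k$, and $t_{k+1}$ all land in the common two-dimensional complement $V$: once the codimension structure with the shared subspace $W=A\mathcal{K}_{k-1}(A,r_0)$ is set up correctly, the Pythagorean identity does all the work, but keeping straight which Krylov space each vector lives in, and recognizing that $r_{k-1}\perp A\mathcal{K}_{k-1}(A,r_0)$ is the precise orthogonality to invoke, is where care is needed.
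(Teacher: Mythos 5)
Your argument is correct. Note, however, that the paper does not actually prove the first equality of \eqref{eq:moret}: it quotes the identity from Moret [Lemma~6] and Eiermann--Ernst [Lemma~3.3] and only supplies the one-line observation for the second equality (namely $A^{-1}z_k\in\mathcal{K}_k(A,r_0)\perp t_{k+1}$), which is exactly your final step. So for the substantive part you are supplying a proof the paper omits. Your route --- writing $r_k=(I-\Pi_k)r_{k-1}$, reducing everything modulo the shared $(k-1)$-dimensional subspace $W=A\mathcal{K}_{k-1}(A,r_0)$, and observing that $u_{k-1}$, $z_k$, $t_{k+1}$ all live in the two-dimensional complement $V=\mathcal{K}_{k+1}(A,r_0)\ominus W$ with $\{u_{k-1},t_{k+1}\}$ orthonormal, so that Pythagoras gives $|(u_{k-1},z_k)|^2+|(t_{k+1},z_k)|^2=1$ --- is a clean, coordinate-free alternative to the derivations in the cited references, which obtain $|(t_{k+1},z_k)|$ as the sine of a Givens rotation from the Arnoldi/Hessenberg recurrence (Moret) or via angles between Krylov subspaces (Eiermann--Ernst). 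What your version buys is self-containedness and transparency about exactly which orthogonality relations are used; the only point needing care, which you correctly flag, is that the two-dimensionality of $V$ requires $\dim\mathcal{K}_{k+1}(A,r_0)=k+1$, and in the terminated case ($\mathcal{K}_{k+1}=\mathcal{K}_k$, hence $t_{k+1}=0$ and $r_k=0$ under the extended-basis convention, or $r_{k-1}=0$ for larger $k$) both sides of \eqref{eq:moret} vanish trivially.
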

\endgroup
\noindent
Note that the second equality in~\eqref{eq:moret} follows trivially
from the definition of $t_{k+1}$ and $z_k$;
indeed $A^{-1}z_k\in\mathcal{K}_k(A,r_0)$, which is a~space spanned
by $\{t_1,\ldots,t_k\}$, and $(t_{k+1},t_j)=0$ for all $j\leq k$.

The approximation numbers of a~bounded linear operator $T$
on a~Hilbert space~$H$ are defined by
\begin{align}
  \label{eq:apnums}
  \sigma_j(T) = \inf_{\substack{
      M\in\mathcal{L}(H) \\
      \operatorname{rank}(M) < j
    }} \|T-M\|,
    \qquad j=1,2,\ldots
\end{align}
The approximation numbers satisfy
\begin{subequations}
  \label{eq:snum}
  \begin{gather}
    \label{eq:snum1}
    \|T\| = \sigma_1(T) \geq \sigma_2(T) \geq \ldots \geq 0,
    \qquad \qquad \qquad \qquad \quad
    \\
    \label{eq:snum2}
    \sigma_{j+k-1}(S+T) \leq \sigma_j(S) + \sigma_k(T),
    \qquad
    j,k=1,2,\ldots,
    \\
    \label{eq:snum3}
    \sigma_j(WTU) \leq \|W\|\, \sigma_j(T)\, \|U\|,
    \qquad \quad
    j=1,2,\ldots
  \end{gather}
\end{subequations}
with any $S,T,U,W\in\mathcal{L}(H)$;
see~\cite[paragraph~2.2.1, p.~79, Theorem~2.3.3, p.~83]{pietsch1987}.
If $T$ is compact, the numbers $\sigma_j(T)$ are the singular
values of~$T$.

\begin{lemma}[{Pietsch~\cite[Lemma 2.11.13, p. 125]{pietsch1987}}]
  Let $T$ be a~bounded linear operator on a~complex Hilbert space $H$.
  Let $\sigma_1(T) \geq \sigma_2(T) \geq \sigma_3(T) \geq \ldots \geq 0$
  denote the approximation numbers of $T$ as defined by~\eqref{eq:apnums}.
  Then for any pair of orthonormal families
  $\{f_1,f_2,\ldots,f_k\}$, $\{g_1,g_2,\ldots,g_k\}\subset H$
  it holds that
  \begin{align}
    \label{eq:determinant}
    \det\bigl\{ (Tf_i,g_j) \bigr\}_{i,j=1}^{k} \leq \prod_{j=1}^k \sigma_k(T).
  \end{align}
\end{lemma}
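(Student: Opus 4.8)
The plan is to realize the matrix $G\coloneqq\{(Tf_i,g_j)\}_{i,j=1}^k$ as the matrix of a finite-rank compression of $T$ and then to identify $|\det G|$ with the product of the singular values of that compression. Since the entries are in general complex, I read the assertion as a bound on $|\det G|$. First I would introduce the isometries $U\colon\CC^k\to H$ and $V\colon\CC^k\to H$ defined on the standard basis $\{e_1,\dots,e_k\}$ of~$\CC^k$ by $Ue_i=f_i$ and $Ve_j=g_j$; orthonormality of $\{f_i\}$ and $\{g_j\}$ gives $U^*U=V^*V=I_{\CC^k}$ and hence $\|U\|=\|V\|=\|V^*\|=1$. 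A direct computation with the inner product, $(V^*TUe_i,e_m)=(Tf_i,Ve_m)=(Tf_i,g_m)$, shows that $V^*TU$ is the transpose of $G$, so that $\det(V^*TU)=\det G$.

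Next I would use the elementary fact that for any square matrix $M\in\CC^{k\times k}$ with singular values $s_1(M)\ge\dots\ge s_k(M)\ge0$ one has $|\det M|=\prod_{j=1}^k s_j(M)$; this follows from the singular value decomposition $M=W\Sigma Y^*$ with $W,Y$ unitary, since $|\det W|=|\det Y^*|=1$ and $\det\Sigma=\prod_j s_j(M)$. Applying this to $M=V^*TU$ reduces the claim to the singular-value estimate $\prod_{j=1}^k s_j(V^*TU)\le\prod_{j=1}^k\sigma_j(T)$, and since for a finite matrix the singular values coincide with its approximation numbers, it suffices to show $s_j(V^*TU)\le\sigma_j(T)$ for each $j=1,\dots,k$.

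To apply the ideal property~\eqref{eq:snum3}, which is stated for operators acting on the single space~$H$, I would pass to $QTP\in\mathcal{L}(H)$, where $P=UU^*$ and $Q=VV^*$ are the orthogonal projections onto $\operatorname{span}\{f_i\}$ and $\operatorname{span}\{g_j\}$, respectively. Because $U$ and $V$ are isometries, $QTP=V(V^*TU)U^*$ has the same nonzero approximation numbers as the core matrix $V^*TU$, so $s_j(V^*TU)=\sigma_j(QTP)$ for $j\le k$. Now \eqref{eq:snum3} gives $\sigma_j(QTP)\le\|Q\|\,\sigma_j(T)\,\|P\|=\sigma_j(T)$, since the projections have norm one. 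Chaining these,
\[
  |\det G|=\prod_{j=1}^k s_j(V^*TU)=\prod_{j=1}^k\sigma_j(QTP)\le\prod_{j=1}^k\sigma_j(T),
\]
which is the asserted bound (the stated right-hand side $\prod_{j=1}^k\sigma_k(T)$ should read $\prod_{j=1}^k\sigma_j(T)$, as is needed for the product estimate~\eqref{eq:stab}).

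The main obstacle I anticipate is the bookkeeping that matches the singular values of the $k\times k$ matrix to the approximation numbers of~$T$: one must argue that compressing by the norm-one isometries $U,V$ (equivalently, by the norm-one projections $P,Q$) cannot increase the $j$-th approximation number, which is precisely where \eqref{eq:snum3} is used, and that the finitely many nonzero approximation numbers of the rank-$\le k$ compression are exactly the matrix singular values entering $|\det G|$. Both points become routine once phrased through the ideal property and the isometric invariance of singular values, so no genuinely hard estimate remains.
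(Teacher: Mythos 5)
The paper offers no proof of this lemma at all: it is imported verbatim from Pietsch~\cite[Lemma 2.11.13, p.~125]{pietsch1987}, so there is no in-paper argument to compare against. Your proof is correct and self-contained, and it uses only tools the paper already quotes, namely the ideal property~\eqref{eq:snum3} and the identification of the approximation numbers of a finite matrix with its singular values (Eckart--Young). The chain $V^*TU=G^{\mathsf{T}}$, $|\det G|=\prod_{j=1}^k s_j(V^*TU)$, $s_j(V^*TU)=\sigma_j(VV^*\,T\,UU^*)\leq\sigma_j(T)$ is essentially the standard route to this determinant inequality, and your device of replacing the isometries $U,V$ by the projections $UU^*,VV^*$ correctly sidesteps the fact that \eqref{eq:snum3} is stated in the paper only for operators acting on the single space~$H$. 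The one step worth spelling out a little more is the equality $s_j(V^*TU)=\sigma_j(QTP)$ for $j\leq k$: it follows because $(QTP)^*(QTP)=U(V^*TU)^*(V^*TU)U^*$ has the same nonzero eigenvalues as $(V^*TU)^*(V^*TU)$, and because the rank-$\leq k$ operator $QTP$ is compact so its approximation numbers are its singular values. You are also right about the two typographical slips in the statement: the left-hand side of \eqref{eq:determinant} must be read as the modulus of the determinant (the entries are complex), and the right-hand side should be $\prod_{j=1}^k\sigma_j(T)$ rather than $\prod_{j=1}^k\sigma_k(T)$; both readings are what the application in the proof of \eqref{eq:resestapprox} actually requires.
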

Moret \cite[ineq.~(2.7)]{moret1997} proves that if $A-\lambda I$
is compact for some $\lambda\in\CC$, then
\begin{align*}
  \frac{\|r_k\|}{\|r_0\|} \leq \prod_{j=1}^k
  \sigma_j(A-\lambda I)\,\sigma_j(A^{-1}).
\end{align*}
We will need a~modification, which follows.
\begin{lemma}
  Suppose that the assumptions of Lemma~\ref{lemma:moret} are fullfilled.
  Then for every $k\in\NN$ and $\lambda\in\CC$ the GMRES residuals
  $r_k \coloneqq r_k^{A,r_0}$ fulfill
  \begin{align}
    \label{eq:resestapprox}
    \frac{\|r_k\|}{\|r_0\|} \leq \prod_{j=1}^k \sigma_j(I-\lambda A^{-1}),
  \end{align}
  where
  $\sigma_1(I-\lambda A^{-1}) \geq \sigma_2(I-\lambda A^{-1}) \geq \ldots \geq 0$
  are the approximation numbers of $I - \lambda A^{-1}$
  as defined by~\eqref{eq:apnums}.
\end{lemma}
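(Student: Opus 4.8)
The plan is to telescope the residual ratios using Moret's formula~\eqref{eq:moret} and then to recognize the resulting product as the modulus of a triangular determinant, to which Pietsch's bound~\eqref{eq:determinant} applies. Setting $T\coloneqq I-\lambda A^{-1}$, the second equality in~\eqref{eq:moret} reads $\|r_m\|=|(t_{m+1},Tz_m)|\,\|r_{m-1}\|$ for every $m\in\NN$. If $r_{m-1}=0$ for some $m\leq k$ then $r_k=0$ and~\eqref{eq:resestapprox} holds trivially; otherwise I would telescope to obtain
\begin{align*}
  \frac{\|r_k\|}{\|r_0\|}=\prod_{m=1}^k |(t_{m+1},Tz_m)|.
\end{align*}

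Next I would exploit the nesting of the Krylov spaces to expose a triangular structure. Since $z_m\in A\mathcal{K}_m(A,r_0)\subset\mathcal{K}_{m+1}(A,r_0)=\operatorname{span}\{t_1,\ldots,t_{m+1}\}$ and, by invertibility of $A$, $A^{-1}z_m\in\mathcal{K}_m(A,r_0)=\operatorname{span}\{t_1,\ldots,t_m\}$, the vector $Tz_m=z_m-\lambda A^{-1}z_m$ lies in $\operatorname{span}\{t_1,\ldots,t_{m+1}\}$, whence $(Tz_m,t_{i+1})=0$ for all $i>m$. Thus the $k\times k$ matrix $P$ with entries $P_{im}\coloneqq(Tz_m,t_{i+1})$ is triangular, its diagonal entries being exactly $(Tz_m,t_{m+1})$, so that
\begin{align*}
  |\det P|=\prod_{m=1}^k |(Tz_m,t_{m+1})|=\prod_{m=1}^k |(t_{m+1},Tz_m)|,
\end{align*}
where the last step is conjugate symmetry of the inner product.

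Finally I would apply Pietsch's lemma~\eqref{eq:determinant} to the operator $T$ and the two orthonormal families $\{z_1,\ldots,z_k\}$ and $\{t_2,\ldots,t_{k+1}\}$ (these are genuinely orthonormal: in the terminating case it suffices to treat $k$ below the termination index $m$, since for larger $k$ the residual already vanishes and~\eqref{eq:resestapprox} is trivial). Up to a harmless transposition, $P$ is precisely the matrix appearing in~\eqref{eq:determinant}, and the modulus of its determinant is therefore bounded by $\prod_{j=1}^k\sigma_j(T)=\prod_{j=1}^k\sigma_j(I-\lambda A^{-1})$. Combining this with the telescoped identity yields~\eqref{eq:resestapprox}.

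The part requiring care is bookkeeping rather than anything deep. I must track the index shift (the rows of $P$ are indexed by $t_2,\ldots,t_{k+1}$, not $t_1,\ldots,t_k$) so that the triangular pattern and its diagonal come out correctly; I must confirm orthonormality of both families in the degenerate, terminating situation; and I must keep the complex conjugation straight so that the product of the diagonal entries of $P$ matches the telescoped product of moduli. It should also be noted that~\eqref{eq:determinant} is to be read with the modulus of the determinant on its left-hand side.
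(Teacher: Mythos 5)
Your proposal is correct and follows essentially the same route as the paper: telescope Moret's formula \eqref{eq:moret}, observe that the nesting $z_m\in\mathcal{K}_{m+1}(A,r_0)$ and $A^{-1}z_m\in\mathcal{K}_m(A,r_0)$ makes the matrix of inner products triangular with the telescoped factors on the diagonal, and bound the resulting determinant by Pietsch's inequality \eqref{eq:determinant}. Your extra bookkeeping (the terminating case, the transposition, and reading \eqref{eq:determinant} with a modulus on the left-hand side) is sound and, if anything, slightly more careful than the paper's own writeup.
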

\begin{proof}
  From~\eqref{eq:moret} we have
  \begin{gather*}
    \frac{\|r_k\|}{\|r_0\|}
    = \prod_{j=1}^k |(t_{j+1},(I-\lambda A^{-1})z_j)|.
    \shortintertext{%
      The matrix
    }
    \bigl\{ |(t_{i+1},(I-\lambda A^{-1})z_j)| \bigr\}_{i,j=1}^{k}
  \end{gather*}
  is upper triangular because by construction,
  \begin{align*}
    0 &= (t_{j+2},z_j) = (t_{j+3},z_j) = \ldots, \\
    0 &= (t_{j+1},A^{-1}z_j) = (t_{j+2},A^{-1}z_j) = \ldots.
  \end{align*}
  This implies that $\prod_{j=1}^k |(t_{j+1},(I-\lambda A^{-1})z_j)| =
  \det\bigl\{ |(t_{i+1},(I-\lambda A^{-1})z_j)| \bigr\}_{i,j=1}^{k}$
  which is bounded by
  $\prod_{j=1}^k \sigma_j(I-\lambda A^{-1})$ due to~\eqref{eq:determinant}.
\end{proof}
\begin{proof}[Proof of Theorem~\ref{thm:stab}]
  By virtue of the invertibility of $A=B+C$ and the invertibility
  of~$B$ we have
  $A^{-1}=B^{-1}-B^{-1}CA^{-1}$.
  Hence by Lemma~\ref{lemma:moret} we have, for any $\lambda\in\CC$,
  \begin{align*}
    \frac{\|r_k^{A,r_0}\|}{\|r_{k-1}^{A,r_0}\|}
    &\leq |(t_{k+1},(I-\lambda B^{-1})z_k)|
        + |(t_{k+1},\lambda B^{-1}CA^{-1}z_k)|
    \\
    &\leq \|I-\lambda B^{-1}\| + \|\lambda B^{-1}CA^{-1}z_k\|.
  \end{align*}
  The sequence $\{z_k\}_{k=1}^\infty$ is an extended orthonormal system, so that
  Bessel's inequality
  \begin{align*}
    \sum_{k=1}^\infty |(y, z_k)|^2 \leq \|y\|^2
    \qquad \text{for all } y \in H
  \end{align*}
  immediately implies that $(y, z_k)\to0$ as $k\to\infty$
  for every $y\in H$, i.e.,
  $\{z_k\}_{k=1}^\infty$ is weakly null. The compactness
  of $C$ implies the strong convergence $\|CA^{-1}z_k\|\to0$
  and consequently
  \begin{align*}
    \limsup_{k\to\infty}
    \frac{\|r_k^{A,r_0}\|}{\|r_{k-1}^{A,r_0}\|}
    &\leq \|I-\lambda B^{-1}\|
    \qquad \text{for any } \lambda\in\CC.
  \end{align*}
  By minimizing the right-hand side in $\lambda$
  and using the equality
  in~\eqref{eq:bndM}
  we obtain~\eqref{eq:limsup}.

  Using~\eqref{eq:resestapprox} and~\eqref{eq:snum}
  we obtain, for any $\lambda\in\CC$,
  \begin{align*}
    \frac{\|r_k^{A,r_0}\|}{\|r_0\|}
    \leq \prod_{j=1}^k \sigma_j(I-\lambda A^{-1})
    &\leq \prod_{j=1}^k \Bigl(
        \sigma_1(I-\lambda B^{-1}) + \sigma_j(\lambda B^{-1}CA^{-1})
    \Bigr)
    \\
    &\leq \prod_{j=1}^k \Bigl(
        \|I-\lambda B^{-1}\| + \|\lambda B^{-1}\|\,\|A^{-1}\|\,\sigma_j(C)
    \Bigr).
  \end{align*}
  Taking $\lambda\in\CC$ minimizing $\|I-\lambda B^{-1}\|$, we
  obtain~\eqref{eq:stab} using Proposition~\ref{lemma:minimizerlambda}.

  From~\eqref{eq:stab} we obtain,
  using the inequality of arithmetic and geometric means
  and H\"older's inequality,
  \begin{align*}
    \biggl( \frac{\|r_k^{A,r_0}\|}{\|r_0\|} \biggr)^\frac1k
    &\leq \tfrac1k \sum_{j=1}^k \Bigl( M_B + (1+M_B)\, \|A^{-1}\|\, \sigma_j(C) \Bigr)
    \\
    &= M_B + \tfrac1k\, (1+M_B)\, \|A^{-1}\|\, \sum_{j=1}^k \sigma_j(C)
    \\
    &\leq M_B + \tfrac1k\, (1+M_B)\, \|A^{-1}\|\,
               \Biggl( \sum_{j=1}^k \sigma_j(C)^p \Biggr)^{\frac1p} k^{\frac{p-1}{p}},
  \end{align*}
  which immediately yields~\eqref{eq:stabrate}.
\end{proof}

\section{Conclusion and outlook}
\label{sec:outro}
We generalized Moret's result~\cite{moret1997} in Theorem~\ref{thm:stab}
and thus obtained a~certain stability of linear GMRES convergence
under compact perturbations. Indeed, the superlinear
convergence of~\cite{moret1997} follows as a~corollary.
In Section~\ref{sec:linear} we obtained a~useful characterization
of linear convergence, which sheds some light on the limits of
the applicability of Theorem~\ref{thm:stab}.

Bound~\eqref{eq:limsup} has an interesting consequence:
\begin{align}
  \label{eq:limsup2}
  \limsup_{k\rightarrow\infty} \frac{\|r_k^{A,r_0}\|}{\|r_{k-1}^{A,r_0}\|}
  \leq \inf_{K\in\mathcal{C}(H)} M_{A+K}
  \eqqcolon
  M_{\pi(A)},
\end{align}
where the infimum is taken over $\mathcal{C}(H)$, the ideal
of all compact operators in $\mathcal{L}(H)$.
Here $\pi$ is the quotient map
$\pi:\mathcal{L}(H)\to\mathcal{L}(H)/\mathcal{C}(H)$
with the quotient space known as the Calkin algebra
and the quotient norm
$\|\pi(T)\|=\inf_{K\in\mathcal{C}(H)}\|T+K\|$.
In particular, every finite-dimensional eigenspace can be ``removed''
from~$A$ when computing the Q-rate~\eqref{eq:limsup2}. Interestingly,
for the R-rate, one has
\begin{align*}
  \limsup_{k\rightarrow\infty}
  \Biggl( \frac{\|r_k^{A,r_0}\|}{\|r_0\|} \Biggr)^{\frac{1}{k}}
  \leq
  \lim_{k\to\infty}
  \inf_{\substack{p\in\mathcal{P}_k\\p(0)=1}}
  \|p(A)\|^{\frac1k}
  =
  \inf_{\substack{k\in\NN\\p\in\mathcal{P}_k\\p(0)=1}} \max_{z\in\sigma_0(A)}
  |p(z)|^{\frac1k},
\end{align*}
where the equality is due to Nevanlinna~\cite[Theorem~3.3.4]{nevanlinna1993}
and $\sigma_0(A)$ consists of the spectrum of~$A$ except for
its isolated points.
By this analogy, one could conjecture that $M_{\pi(A)}$ also
``cannot see'' any eigenspaces, including those of infinite dimension.
One might wonder whether there is a~useful characterization of~$M_{\pi(A)}$.

Another interesting question concerns $s$-step linear convergence
$\|r_k\|\leq M\|r_{k-s}\|$ with some $s\in\NN$ and $M<1$.
The present result assumes linear reduction in every step, i.e.,
$s=1$. On the other hand, an~important class of problems, e.g.,
self-adjoint operators, exhibit a~reduction in every second step,
i.e., $s=2$ and $M<1$. To this end, we cannot see how the technique
of Section~\ref{sec:proof} generalizes to $s\geq2$. Note that
\cite{huhtanen-nevanlinna-2000}
is concerned with related issues but
does not directly provide such stability we obtained for the case $s=1$.
Also note that pseudospectral techniques have been used to obtain bounds
for the case $s>1$; see, e.g., \cite{embree1999,trefethen-embree-2005}.
 
\section*{Acknowledgement}
The author would like to thank
Erin Carson, Oliver Ernst, Zden\v{e}k Strako\v{s}, and Petr Tich\'{y}
for fruitful discussions, as well as
the two anonymous referees, who provided
useful suggestions which improved the paper.

\bibliographystyle{siamplain}

\end{document}